\newfont{\bb}{msbm10 at 11pt}
\newfont{\bbsmall}{msbm8 at 8pt}
\def\rth{\mathbb{R}^3}
\def\R{\mathbb{R}}
\def\N{\mathbb{N}}
\def\Z{\mathbb{Z}}
\def\esf{\mathbb{S}}
\newcommand{\ben}{\begin{enumerate}}
\newcommand{\bit}{\begin{itemize}}
\newcommand{\een}{\end{enumerate}}
\newcommand{\eit}{\end{itemize}}
\newcommand{\Int}{\mbox{Int}}
\renewcommand{\S}{\Sigma}
\newcommand{\wt}{\widetilde}
\newcommand{\HH}{\mbox{\bb H}}
\def\cL{{\cal L}}
\def\cS{{\cal S}}
\def\t{{\theta}}
\def\g{{\gamma}}
\def\G{{\Gamma}}
\def\l{{\lambda}}
\def\de{{\delta}}
\def\ve{{\varepsilon}}
\newcommand{\sol}{\mathrm{Sol}_3}
\def\centerbmp#1#2#3{\vskip#2\relax\centerline{\hbox to#1{\special
    {bmp:#3 x=#1, y=#2}\hfil}}}
\newtheorem{theorem}{Theorem}[section]
\newtheorem{remark}[theorem]{Remark}
\newtheorem{corollary}[theorem]{Corollary}
\newtheorem{definition}[theorem]{Definition}
\newtheorem{assertion}[theorem]{Assertion}
\newenvironment{proof}{\smallskip\noindent{\it Proof.}\hskip \labelsep}
{\hfill\penalty10000\raisebox{-.09em}{$\Box$}\par\medskip}
\begin{document}
\begin{title}{Finite topology minimal surfaces in homogeneous three-manifolds}
\end{title}
\vskip .2in

\begin{author}
{William H. Meeks III\thanks{This material is based upon
   work for the NSF under Award No. DMS-1309236.
   Any opinions, findings, and conclusions or recommendations
   expressed in this publication are those of the authors and do not
   necessarily reflect the views of the NSF.}
   \and Joaqu\'\i n P\' erez\thanks{The second
author was supported in part
by the MINECO/FEDER grant no. MTM2011-22547 and
MTM2014-52368.
}}
\end{author}
\maketitle
\begin{abstract}
We prove  that any complete,
embedded minimal surface $M$ with finite topology in a
homogeneous three-manifold $N$ has positive injectivity radius. When
one relaxes the condition that $N$ be homogeneous to that of
being locally homogeneous, then we show that the closure
of  $M$ has the structure of a minimal lamination of $N$.
As an application of this general result we prove that any complete,
embedded minimal surface with finite genus and a countable number of ends
is compact when the ambient space is $\esf^3$ equipped with  a homogeneous
metric of  nonnegative scalar curvature.

\noindent{\it Mathematics Subject Classification:} Primary 53A10,
   Secondary 49Q05, 53C42.

\noindent{\it Key words and phrases:} Minimal surface, stability,
minimal lamination, minimal parking garage structure, injectivity radius.
\end{abstract}


\section{Introduction.}
In this paper we apply the Local Picture Theorem on the Scale of Topology, which is Theorem~1.1 in~\cite{mpr14}
(see Theorem~\ref{tthm3introd} below for the statement of this result in the finite genus setting),
to prove that the injectivity radii of certain minimal surfaces in certain Riemannian three-manifolds are never zero.

 \begin{theorem}
\label{thm:finitetop}
Let $N$ be a complete, locally homogeneous three-manifold with positive injectivity radius.
Then, every complete, embedded minimal surface of finite topology in $N$ has positive injectivity radius.
\end{theorem}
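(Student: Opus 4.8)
The plan is to argue by contradiction: suppose $M$ is a complete, embedded minimal surface of finite topology in $N$ with zero injectivity radius. Since a complete, locally homogeneous three-manifold with positive injectivity radius is homogeneously regular, $M$ and $N$ satisfy the hypotheses of Theorem~\ref{tthm3introd}. The injectivity radius function $I_M\colon M\to(0,\infty]$ is continuous, everywhere positive, and bounded away from $0$ on each compact subset of $M$; writing $M$ as a compact core together with a finite collection of annular ends, we obtain a sequence $p_n\in M$ which eventually lies in one fixed annular end $E$, diverges to infinity in $M$, and has $I_M(p_n)\to0$.

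First I would feed $\{p_n\}$ into Theorem~\ref{tthm3introd}, the Local Picture Theorem on the Scale of Topology in the finite genus setting. After the standard point-selection argument, this produces scales $\lambda_n\to\infty$ such that the rescaled surfaces $\lambda_n(M-p_n)$, read in geodesic normal coordinates of $N$ centered at $p_n$, converge smoothly and with multiplicity one on compact sets to a complete, properly embedded minimal surface $\mathcal M\subset\rth$ of bounded Gaussian curvature and positive injectivity radius, with $\vec 0\in\mathcal M$; any parking-garage component of the limit is excluded here, since it would force arbitrarily large first Betti number into a fixed small ball of $M$, against the finite topology of $M$. Since $2\,I_M(p_n)$ bounds the length of a shortest geodesic loop of $M$ at $p_n$, the limit $\mathcal M$ contains a closed geodesic of length at most $2$ through $\vec 0$, hence is not simply connected, in particular not a plane. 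Since every rescaled intrinsic ball $\lambda_n\big(B_M(p_n,R/\lambda_n)\big)$ eventually lies in the annular end $E$, the limit $\mathcal M$ has genus zero. By the classification of complete, properly embedded minimal surfaces of genus zero in $\rth$ --- planes, helicoids, catenoids, and the Riemann minimal examples (Meeks--P\'erez--Ros, building on Colding--Minicozzi and Meeks--Rosenberg) --- and since $\mathcal M$ is neither a plane nor a helicoid, $\mathcal M$ must be a catenoid or a Riemann minimal example.

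Finally I would turn this into a contradiction with the finite topology of $M$. For each $R>0$ and all large $n$ the convergence produces an embedded annulus $A_n\subset E$ that after rescaling by $\lambda_n$ is $\ve_n$-close ($\ve_n\to0$) to the intersection of a fixed catenoid with the ball of radius $R$; in particular $A_n$ carries a simple closed curve $\gamma_n$, essential in $A_n$, with $\mathrm{length}(\gamma_n)\to0$, and $A_n\setminus\gamma_n$ consists of two nearly flat graphical annuli on the two sides of $\gamma_n$. Such a waist $\gamma_n$ bounds no disk in $M$: by the monotonicity and isoperimetric inequalities, a minimal disk with boundary $\gamma_n$ is confined to a ball of radius comparable to $\mathrm{length}(\gamma_n)$, in which $M$ is $\ve_n$-close to an embedded rescaled catenoidal neck whose waist bounds no disk --- a contradiction. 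Hence $\gamma_n$ is essential in $E\cong\esf^1\times[0,\infty)$, so it is isotopic in $E$ to the core circle; passing to a subsequence, the $\gamma_n$ are pairwise disjoint, cobound consecutive sub-annuli of $E$, and pinch to zero length while escaping to infinity along $E$. Consequently $E$ accumulates on a minimal lamination of $N$ admitting a limit leaf $P$, which by the structure theory of limit leaves is stable, and $E$ would have to accumulate on $P$ as an infinite-sheeted multigraph --- impossible when $P$ is compact, since an annulus supports no such multigraph over a closed surface, and excluded for noncompact $P$ by the maximum principle applied to the nearly flat sheets of the $A_n$; alternatively, the second ends of the catenoidal necks would produce an extra annular end of $M$, contradicting finite topology. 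The genuine obstacle is exactly this last step: a catenoid --- indeed even a Riemann minimal example --- is a perfectly admissible complete embedded minimal surface of positive injectivity radius, so the impossibility becomes visible only through the global constraint of packing infinitely many shrinking catenoidal necks into the single annular end $E$ of the finite-topology surface $M$ inside a space of positive injectivity radius; carrying out the analysis of $E$ near the limit leaf $P$, and excluding every global asymptotic configuration, is where the real work lies.
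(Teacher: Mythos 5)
There is a genuine gap: your argument sets up the blow-up correctly but never actually derives the contradiction, and you acknowledge as much when you write that ``carrying out the analysis of $E$ near the limit leaf $P$\dots is where the real work lies.'' The existence of a catenoid or Riemann minimal example as a local picture is not, by itself, contradictory, and the accumulation/limit-leaf analysis you sketch is not the mechanism that closes the proof. The paper's actual argument uses the homogeneity of $N$ in an essential way that is absent from your proposal: after reducing to the case where $N$ is a metric Lie group, one takes a right invariant (hence Killing) vector field $V$ and observes that the scalar flux $\int_{\G}\langle V,\eta\rangle$ is a homological invariant on the annular end $E$. Since the limit local picture has nonzero flux, the generators $\G_n$ of $H_1(E)$ near the blow-up points all carry the same nonzero flux, yet they can be chosen with lengths tending to zero; this forces the blow-up points to diverge in $N$. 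The contradiction is then obtained case by case: $SU(2)$ is compact so divergence is impossible; in $\wt{\mbox{PSL}}(2,\R)$ one traps a subannulus $E_k$ between right cosets of a two-dimensional subgroup (which are minimal surfaces at constant mutual distance) and applies the maximum principle; in semidirect products $\R^2\rtimes_A\R$ one uses that coordinate planes are minimal together with the mean curvature comparison principle. None of these ingredients appears in your proposal, and without some use of the (locally) homogeneous structure the statement should not be expected to follow, as the paper's own remark about exotic metrics on $\esf^3$ indicates.

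Two further specific problems. First, your exclusion of the parking garage alternative is wrong: a compact subsurface of the annular end $E$ is a planar domain whose first Betti number can be arbitrarily large (many boundary components), so finite topology of $M$ does not rule out item~5 of Theorem~\ref{tthm3introd}; the paper keeps this case and handles it with the same flux and maximum principle arguments, using the geodesics $\g_n$ of item~5.2. Second, when $N$ is locally homogeneous but not simply connected it need not be a Lie group at all, and the paper must pass to the universal cover, distinguishing whether the short geodesic loops are homotopically trivial in $E$ (in which case the lifted disks make the nonzero flux homologically trivial, a contradiction) or nontrivial (in which case the whole end lifts, reducing to the simply connected case). Your proposal does not address this reduction.
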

In the case that the ambient three-manifold $N$ is isometric to $\esf^2\times \R$
with a scaling of its  standard product metric, then this result follows from
Theorem~15 in~\cite{mr13} where Meeks and Rosenberg also applied Theorem~1.1 in~\cite{mpr14}
to prove the stronger property that
that such minimal surfaces have bounded second fundamental form, linear ambient area growth
and so they are also proper in $\esf^2\times \R$.
For background material  on the geometry and classification
of homogeneous three-manifolds, see~\cite{mpe11}.

The next result removes the positive injectivity radius assumption for the ambient space $N$.
The conclusion that we obtain in this setting is also weaker than the one in Theorem~\ref{thm:finitetop},
as follows from the Minimal Lamination Closure Theorem in~\cite{mr13}.

\begin{corollary}
\label{cor:ft}
If $M$ is a complete embedded minimal surface of finite
topology in a complete, locally homogeneous three-manifold $N$, then the closure
$\overline{M}$ has the structure of a minimal lamination of $N$. Furthermore:

\begin{enumerate}
\item Each limit leaf of $\overline{M}$ is stable (more precisely, the two-sided cover of the leaf is stable).
\item If $N$ has positive scalar curvature, then $M$ is proper in $N$.
\item If $N$ is simply connected and has nonnegative scalar curvature, then $M$ is proper in $N$.
\item If $N$ is the round three-sphere $\esf^3$, then $M$ is compact.
\end{enumerate}
\end{corollary}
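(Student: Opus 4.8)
The plan is to establish the lamination structure of $\overline M$ first and then read off the four additional properties. For the lamination structure I would reduce everything to showing that $M$ has \emph{locally bounded geometry} in $N$: for each $p\in N$ there is a metric ball $B(p,r_p)$ on which $|A_M|$ is bounded and on which the injectivity radius function of $M$ is bounded away from zero. Granting this, completeness and embeddedness of $M$ together with the Minimal Lamination Closure Theorem of \cite{mr13} give that $\overline M$ has the structure of a minimal lamination of $N$ with $M$ as a leaf; when $N$ itself has positive injectivity radius this is a direct application, and in general one applies it to the lift of $M$ to the universal cover $\widetilde N$, which by Singer's theorem is a simply connected homogeneous three-manifold and hence has positive injectivity radius, the resulting lamination being invariant under the deck group and so descending to $N$. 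Item (1) is then immediate from the Stable Limit Leaf Theorem of Meeks--P\'erez--Ros, since every leaf of $\overline M$ different from $M$ lies in $\overline M\setminus M$ and is therefore a limit leaf.

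Thus the real structural content is the local bounded geometry, and here the Local Picture Theorem on the Scale of Topology is the essential tool; this is in fact the same estimate underlying Theorem~\ref{thm:finitetop}, whose positive injectivity radius hypothesis on $N$ is used only afterwards, to globalize the bound. I would argue by contradiction: if the estimate fails near some $p\in N$ one finds points $p_n\to p$ in $M$ at which either $|A_M|$ or the reciprocal of the injectivity radius of $M$ diverges, and, using finite topology (a compact $K\subset M$ with $M\setminus K$ a finite union of annular ends, $|A_M|$ bounded on $K$, and a simply connected surface of bounded curvature having injectivity radius bounded below), after passing to a subsequence the $p_n$ lie in a single annular end, hence in a region of genus zero. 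Rescaling the metric of $N$ near $p_n$ so that the degenerating scale becomes $1$, and using that any blow-up of a locally homogeneous three-metric is flat (see \cite{mpe11}), the ambient manifolds converge in the pointed $C^\infty$ topology to $\R^3$ while the rescaled surfaces have uniformly bounded genus; feeding this sequence into Theorem~\ref{tthm3introd} produces in the limit a minimal parking garage structure on $\R^3$ with at least two columns, or a properly embedded non-simply-connected minimal surface in $\R^3$.

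\textbf{Turning this limit picture into a contradiction with the finite topology of $M$ is the heart of the matter and the step I expect to be hardest.} The limit carries an essential loop, or a handle, at the unit scale; pulling this back through the rescalings, $M$ would have to contain, arbitrarily close to $p$, essential loops of length tending to zero, which must then escape every compact subset of $M$ and hence lie in a pinching annular end --- a configuration one must exclude. In the parking garage case one must instead rule out that the topology (genus, or number of ends) of $M$ could concentrate without bound. It is precisely in excluding these possibilities that the global finiteness of the topology of $M$ is genuinely used.

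For the remaining assertions, assume the lamination structure and item (1) and suppose $M$ is not proper, so $\overline M\setminus M$ is a non-empty union of limit leaves and contains a leaf $L$ whose two-sided cover $\widehat L$ is a complete, two-sided, \emph{stable} minimal surface. First, $L$ cannot be compact: otherwise the leaves of $\overline M$ near $L$ would be compact normal graphs over $L$, and since $M$ accumulates at $L$ the connected surface $M$ would either contain infinitely many pairwise disjoint compact leaves, or coincide with a single such graph lying at positive distance from $L$ and hence not accumulate at $L$ --- both absurd. So $\widehat L$ is non-compact, and I would invoke the classification of simply connected homogeneous three-manifolds and of their complete stable minimal surfaces from \cite{mpe11}: passing to $\widetilde N$, if the scalar curvature is positive then every complete two-sided stable minimal surface is compact (a slice $\esf^2\times\{t\}$ in $\esf^2\times\R$, and a metric three-sphere of positive scalar curvature contains no non-compact complete two-sided stable minimal surface), contradicting $\widehat L$ and proving item (2); since the round $\esf^3$ is such a manifold and is compact, and a proper surface in a compact manifold is compact, item (4) follows. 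For item (3), a simply connected homogeneous three-manifold of nonnegative scalar curvature is $\R^3$, $\esf^2\times\R$, or a metric three-sphere; $M$ is then proper by Colding--Minicozzi's properness theorem for complete embedded minimal surfaces of finite topology in $\R^3$ in the first case, by Theorem~15 of \cite{mr13} (or because the only stable limit leaf could be a compact slice) in the second, and by the positive scalar curvature argument in the third.
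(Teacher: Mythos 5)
There are two genuine gaps. The first is that the load-bearing step of your argument --- turning the blow-up limit produced by Theorem~\ref{tthm3introd} into a contradiction with the finite topology of $M$ --- is exactly the step you leave open (``the step I expect to be hardest''). The paper does not exclude the limit picture by a direct topological pull-back of essential loops or handles; it uses a flux argument. The short closed geodesics $\G_n$ near the blow-up points are, after rescaling, curves along which the limit object (catenoid, Riemann example, or parking garage, cf.\ item~5.2 of Theorem~\ref{tthm3introd}) has nonzero flux. One takes a right invariant vector field $V$ on the metric Lie group (a Killing field, since the metric is left invariant), normalized at the accumulation point $p$ of the $p_n$, and observes that $\mbox{Flux}(V^T,\G)=\int_\G\langle V,\eta\rangle$ is a homological invariant on the annular end $E$. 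If the $\G_n$ are essential in $E$ this flux equals the fixed nonzero value $\mbox{Flux}(V^T,\partial E)$, yet it is bounded by $2\,\mbox{length}(\G_n)\to 0$ since $V$ is bounded near $p$; if the $\G_n$ bound disks the flux must vanish, again contradicting the nonvanishing flux of the limit picture. (The paper runs this through a lift to the universal cover, cases (Q1)/(Q2) of Assertion~\ref{ass2}, to handle non-simply-connected $N$; this is where your deck-group remark would have to be made precise.) Without some version of this flux mechanism your proof of the lamination structure is incomplete.

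The second gap is in item~3. By item~2 one reduces to scalar curvature identically zero, and Milnor's classification \cite{mil2} then leaves two cases: $N=\R^3$ (handled by Colding--Minicozzi \cite{cm35}) and $N=SU(2)$ with a left invariant metric of \emph{zero} scalar curvature --- such metrics exist (sufficiently squashed Berger-type metrics). Your list ``$\R^3$, $\esf^2\times\R$, or a metric three-sphere'' implicitly acknowledges this last case, but your proposed resolution, ``by the positive scalar curvature argument in the third,'' does not apply there: with zero scalar curvature, stability does not force a limit leaf to be a sphere or projective plane. The paper needs a genuinely different argument here: item~2(a) of Theorem~2.13 in \cite{mpr19} gives that the stable two-sided cover $\wt{L}$ of a limit leaf has at most quadratic area growth, Theorem~1 of \cite{mper1} then shows the space of bounded Jacobi functions on $\wt{L}$ is one-dimensional and consists of functions of constant sign, and applying this to the bounded Jacobi functions $\langle F_i,\nu\rangle$ induced by two right invariant Killing fields $F_1,F_2$ tangent to $\wt{L}$ at a point forces $F_1,F_2$ and hence $[F_1,F_2]$ to be everywhere tangent to $\wt{L}$, contradicting the structure constants of $SU(2)$. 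Your item~4 via ``positive scalar curvature plus proper in compact implies compact'' is fine for the round sphere, and your items~1 and~2 essentially match the paper, but item~3 as written does not close.
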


\begin{remark}
  {\rm
Item 1 of Corollary~\ref{cor:ft} still holds without the hypothesis on $N$ to be locally homogeneous.
On the other hand, it can be shown that there exists a Riemannian metric of positive scalar curvature
on the three-sphere that admits a complete embedded minimal
plane whose closure does not admit the structure of a minimal lamination
(see e.g., \cite{CD2}). Hence, our hypothesis that $N$ is locally homogeneous is necessary
for items~{2, 3} of Corollary~\ref{cor:ft} to hold.
}
\end{remark}

In the sequel, we will denote by $B_M(p,r)$ (resp. $\overline{B}_M(p,r)$)
the open (resp. closed) metric ball centered at a point $p$
in a Riemannian manifold $M$, with radius $r>0$. In the case $M$ is complete,
we will let $I_M\colon M\to (0,\infty ]$ be
the injectivity radius function of $M$, and given a subdomain $\Omega \subset
M$, $I_{\Omega }=(I_M)|_{\Omega }$ will stand for the restriction of $I_M$ to $\Omega $.
The infimum of $I_M$ is called the {\it injectivity radius} of $M$.

We now briefly explain the main tool used in our proofs, namely Theorem~1.1 in~\cite{mpr14},
in the special case of surfaces of finite genus in a homogeneously
regular\footnote{A Riemannian three-manifold $N$ is {\it
homogeneously regular} if there exists an $\ve > 0$ such that
the image by the exponential map of any $\ve$-ball in a tangent space
$T_xN$, $x\in N$, is uniformly close to an $\ve$-ball in $\rth$ in
the $C^2$-norm. In particular, $N$ has positive injectivity radius. Note that
if $N$ is compact, then $N$ is
homogeneously regular.} three-manifold.
The paper~\cite{mpr14} was devoted to an analysis of the extrinsic geometry of
any embedded minimal surface $M$ (not necessarily with finite genus)
in small intrinsic balls in a homogeneously regular Riemannian
three-manifold, such that the injectivity radius function of $M$ is
sufficiently small in terms of the ambient geometry of the balls. We
carried out this analysis by blowing-up such an $M$ at a sequence of
points with {\it almost-minimal injectivity radius} (we will define this
notion precisely in items~{1, 2, 3} of the next theorem), which
produces a new sequence of minimal surfaces, a subsequence of which
has a natural limit object being either a properly embedded minimal
surface in $\rth$, a minimal parking garage structure on $\rth$
(see Section~3 of~\cite{mpr14}
for a discussion of this notion) or possibly, a
particular case of a singular minimal lamination of $\R^3$ with
restricted geometry; an important property is that this last possibility can
occur only if $M$ fails to have locally finite genus, in the sense given  by the
following definition.

\begin{definition}
{\rm
A Riemannian surface $M$ has {\em locally finite genus} if there exists an $\ve>0$ such
that intrinsic balls in $M$ of radius $\ve$ have uniformly bounded genus.
}
\end{definition}

The following result is an adaptation of Theorem~1.1 and Proposition~4.20 
in~\cite{mpr14} to the case of $M$ having locally finite genus.
\begin{theorem}[Local Picture on Scale of Topology for Locally Finite Genus]
\label{tthm3introd}
There exists a smooth decreasing function $\de\colon (0,\infty) \to (0,1/2)$
with $\lim_{r\to \infty} r\, \de(r)=\infty$
such that the following statements hold.
Suppose $M$ is a complete, embedded minimal
surface with injectivity radius zero and locally finite genus
in a homogeneously regular
 three-manifold~$N$. Then, there exists a sequence
of points $p_n\in M$ (called ``points of almost-minimal injectivity radius'')
and positive numbers $\ve _n= n\, I_{M}(p_n)\to 0$ such that:
\begin{description}
\item[{\it 1.}]  For all $n$, the closure $M_n$ of the component
of $M\cap B_N(p_n,\ve _n)$  that contains $p_n$ is  a compact surface with boundary in
$\partial B_N(p_n,\ve _n)$. Furthermore, $M_n$ is contained in the intrinsic open ball
$B_M(p_n,\frac{r_n}{2}I_M(p_n))$, where
$r_n>0$ satisfies $r_n\, \de (r_n)=n$.

\item[{\it 2.}] Let $\l _n=1/I_{M}(p_n)$.
Then, 
$\l _nI_{M_n}\geq 1-\frac{1}{n}$ on $M_n$.

\item[{\it 3.}] The metric balls $\l_nB_N(p_n,\ve_n)$
of radius $n=\l_n\ve_n$
converge uniformly as $n\to \infty $ to $\rth$ with
its usual metric (so that we identify $p_n$ with
$\vec{0}$ for all $n$).
\end{description}
Furthermore, exactly one of the following two possibilities occurs.
\begin{description}
\item[{\it 4.}] The surfaces $\l_nM_n$ have
uniformly bounded Gaussian curvature
on compact subsets\footnote{As $M_n\subset B_N(p_n,\ve _n)$, the convergence
$\{ \l _nB_N(p_n,\ve _n)\} _n\to \R^3$ explained in item~{3} allows us to view
the rescaled surface $\l _nM_n$ as a subset of $\R^3$. The uniformly bounded
property for the Gaussian curvature of the induced metric on $M_n\subset N$ rescaled by
$\l _n$ on compact subsets of $\R^3$ now makes sense.} of
$\rth$ and there exists a connected, properly
embedded minimal surface $M_{\infty}\subset \R^3$
with $\vec{0}\in M_{\infty }$, $I_{M_{\infty}}\geq 1$
and $I_{M_{\infty}}(\vec{0})=1$,
 such that for any
$k \in \N$, the surfaces $\l _nM_n$ converge $C^k$
on compact subsets of $\rth$  to $M_{\infty}$ with
multiplicity one as $n \to \infty$.
\item[{\it 5.}] After a rotation in $\rth$, the surfaces $\l_nM_n$ converge to
a minimal parking garage structure on $\rth$,
consisting of a foliation $\cL$ of $\R^3$ by
horizontal planes, with two oppositely handed columns forming a
 set $S(\cL)$ of two vertical straight lines (the set $S(\cL)$ is the singular set of
 convergence of $\l _nM_n$ to $\cL$, see Definition~\ref{def2.2} below), and:
\begin{enumerate}[(5.1)]
\item $S(\cL)$ contains a line $l_1$  which
passes through the closed ball of radius 1 centered at the origin, and another line $l_2$ at
distance one from $l_1$.
\item \label{it5.2} There exist oriented closed geodesics  $\g_n\subset\l_nM_n$
with lengths converging to~$2$, which converge to the line segment $\g$ that joins
$(l_1\cup l_2)\cap \{x_3=0\}$ and such that the integrals of the unit conormal vector
of $\l_nM_n$ along $\g_n$ in the induced exponential $\rth$-coordinates of $\l _nB_N(p_n,\ve _n)$
converge to a horizontal vector of length $2$ orthogonal to $\g$.
\end{enumerate}
\end{description}
\end{theorem}
%

\begin{definition}
\label{def2.2}
{\rm
If $\{ \S_n\} _n$ is a sequence of complete embedded minimal surfaces in a Riemannian three-manifold
$N$, consider the closed set
$A\subset N$ of points $p\in N$ such that for every neighborhood $U_p$ of $p$ and every subsequence
of $\{ \S_{n(k)}\} _k$, the sequence of norms of the second fundamental forms of $\S_{n(k)}\cap U_p$
is not uniformly bounded.
By the arguments in Lemma~1.1 of Meeks and Rosenberg~\cite{mr8},
after extracting a subsequence, the $\S_n$ converge
on compact subsets of $N-A$ to a minimal lamination $\cL'$ of $N-A$ that extends to a minimal lamination
$\cL$ of $N-\cS$,
where $\cS\subset A$ is the (possibly empty) {\it singular set} of $\cL$, i.e., $\cS$ is
the closed subset of $N$
such that $\cL$ does not admit a local lamination structure around any point of $\cS$.
We will denote by $S(\cL)=A-\cS$ the {\it singular set of convergence} of the $\S_n$
to $\cL$, i.e., those points of $N$ around which $\cL$ admits a lamination structure
but where the second fundamental forms of the $\S_n$ still blow-up.
}
\end{definition}

\section{Proof of Theorem \ref{thm:finitetop} and Corollary~\ref{cor:ft}.}
\noindent {\em Proof of Theorem \ref{thm:finitetop}.}
Let $M$ be a complete, embedded minimal surface of finite topology in a complete, locally homogeneous
three-manifold $N$ with positive injectivity radius. Since $M$ has finite
topology, then  it has a finite number of ends,
all being topologically annuli. If the injectivity radius function $I_M$ of $M$
is bounded away from zero on each of its annular ends, then $I_M$ is
globally bounded away from zero. Hence, the theorem will follow provided that we show that
$I_M$ is bounded away from zero on each end of $M$, or equivalently,
if each end representative $E$ of $M$ satisfies the following property:
\begin{quote}
(End) If $f\colon E=\esf^1\times [0,\infty)\to N$
is a complete injective minimal immersion and $N$ satisfies the hypotheses of
Theorem~\ref{thm:finitetop},
then the injectivity radius function $I_E$ of $E$ is bounded away from zero
in the complement of any neighborhood of the boundary $\partial E$.
\end{quote}
Our strategy to prove property (End) will be first prove it in  the particular case
 when $N$ is simply connected (Assertion~\ref{ass} below) and then use this
 particular case to demonstrate the general case
 (Assertion~\ref{ass2}).

\begin{assertion} \label{ass}
If $N$ is simply connected, then property {\rm (End)} holds.
\end{assertion}
\begin{proof}
As $N$ is locally homogeneous, complete, simply connected and has dimension three, then $N$ is homogeneous.
In this setting and as shown in~\cite{mpe11}, $N$ is isometric to either $\esf ^2\times \R $ 
with a scaling of its standard metric
or to a three-dimensional metric Lie group, i.e., a Lie group endowed with a left invariant metric.
If $N=\esf^2\times \R $ with a scaling of its standard metric, then
we refer the reader to~\cite{mr13} that contains the stronger result
that the second fundamental form of a complete annular minimal end $E$ is bounded;
also see the discussion just after the statement of Theorem~\ref{thm:finitetop}.
Therefore, in the sequel  we will assume that
$N$ is a metric Lie group.

 For simplicity, we let $E=f(E)$ denote the embedded minimal annulus.
If $I_E$ is not bounded away from zero outside of some neighborhood of $\partial E$,
then there exists an intrinsically divergent sequence of points $p_n\in E$, with
$I_E(p_n)<\frac{1}{n}$, $d_E(p_n,p_{n+1})>n$; such a sequence of points $p_n$
can be chosen to be points of almost-minimal injectivity radius.
By Theorem~\ref{tthm3introd}, after choosing a subsequence, the local picture on the scale of topology
of $E$ around the sequence $p_n$ is either a minimal parking garage structure of $\rth$ with
two oppositely handed columns (that is, item~5 of Theorem~\ref{tthm3introd} occurs),
or a properly embedded minimal surface $M_{\infty }$ with genus zero
(item~4 of Theorem~\ref{tthm3introd} occurs).
In this last case, $M_{\infty }$ is a catenoid
or a Riemann minimal example by classification results (see~\cite{mpr6}
and references therein).

We claim that the sequence of points $p_n$ is diverging in $N$.
Arguing by contradiction, we may assume, after replacing by a subsequence,
that the points $p_n$ converge to a point $p\in N$. Using normal coordinates
around $p$ and taking a further subsequence we may also assume that the rescaled
space in which the local picture of $E$ exists is $\rth$ with induced coordinates,
and that the flux vector of the rescaled limit object is not zero and
parallel to $(0,0,1)$; see item~\ref{it5.2} of Theorem~\ref{tthm3introd}
for the definition of this flux vector in the case that the limit 
object is a parking garage structure on $\rth$, i.e., it is not a catenoid or a 
Riemann minimal example in which cases the flux vector is just the flux vector 
for any of the circles on the limit surface.
Let $V$ be the right  invariant vector field on $N$
determined by $V(p)=(0,0,1)$ (with respect to a previously chosen orthonormal basis
of $T_pN$ so that under rescaling of the metric, the direction of $V$ at $p$ gives rise
to the vertical direction in the limit $\R^3$). Note that since the metric on
$N$ is left invariant, then every right  invariant vector field on $N$ is a Killing
field. Let $V^T$ denote the tangential part of $V$ on $E$. Consider the (scalar) flux of $V^T$
across any oriented closed curve $\G$ in $E$, defined as
\begin{equation}
\label{flux}
\mbox{Flux}(V^T,\G )=\int _{\G }\langle V,\eta \rangle ,
\end{equation}
where $\langle ,\rangle $ stands for the ambient metric on $N$ and $\eta $ is a unit conormal to $E$ along $\G $.

Since
the mean curvature of $E$ is zero and  $V$ is a Killing field, then the divergence of
$V^T$ in $E$ vanishes identically, and an elementary application of
the divergence theorem implies that Flux$(V^T,\G )$ is a homological invariant of
$\G$. Choose simple closed oriented curves $\G_n\subset E$ near $p_n$ for $n$ large, so that
$\mbox{Flux}(V^T,\Gamma _n)\neq 0$, which can be done since after
rescaling, the vertical component of the flux vector of the limit object is not zero.
As the homology group $H_1(E)$ is $\Z $, we deduce that the $\G_n$ represent one of the
two nontrivial generators
of $H_1(E)$ and that the $\G_n$ can be oriented so that
$\mbox{Flux}(V^T,\Gamma _n)=\mbox{Flux}(V^T,\partial E)$ for all $n$.
But by the injectivity radius zero assumption on $E$, these curves $\G _n$
can be chosen to have lengths converging to zero,
which is impossible because the $\G_n$ converge
to $p$ and $|V(p)|=1$ and so $|\mbox{Flux}(V^T,\G_n)|\leq 2\mbox{ length}(\G_n)$.
This contradiction implies that the blow-up points $p_n$ are divergent in $N$.

The proof of Assertion \ref{ass} now breaks up into 3 cases, depending on the ambient space.

\begin{description}
\item[Case A:]\; $N$ is the  special unitary
Lie group $SU(2)$ with a left invariant metric.
\item[Case B:] $N$ is the Lie group $\widetilde{\mbox{PSL}}(2,\R)$ (the universal cover
of the group of isometries of the hyperbolic plane) with a left invariant metric.
\item[Case C:]\; $N$ is a semidirect product $\R^2\rtimes _A\R$ for some real
$2\times 2$ matrix $A$, equipped with a left invariant metric; 
see Section~2.1 in~\cite{mpe11} for details.
This case includes all the noncompact, simply connected, nonunimodular metric Lie groups
as well as the Heisenberg group Nil$_3$,
the solvable group $\sol$, the universal cover $\widetilde{E}(2)$ of the Euclidean group of
isometries of $\R^2$ and the abelian Lie group $\rth$, each of them equipped
     with any left invariant metric.
\end{description}
\par
\noindent
{\bf Proof of Case A}: This case follows immediately from the fact that  $SU(2)$ is compact
and the just proved fact that whenever Assertion \ref{ass} fails,
there exists a sequence of  points $p_n\in E$ of almost-minimal injectivity radius
that diverges in the ambient space.
\par
\vspace{.2cm}
\noindent
{\bf Proof of Case B}:  Let $G\colon E\to \esf^2$ denote the left invariant Gauss map for $E$;
by this we mean the mapping valued in the unit sphere of the tangent
space $T_eN$ to $N$ at the identity element $e\in N$ (or equivalently, the Lie algebra $\mathfrak{g}$ of
the metric Lie group $N$),
which assigns to each point $p\in E$ the left translation $G(p)\in T_eN$
of the unit normal vector field $\nu _p$ to $E$ at $p$, i.e., $(l_p)_*G(p)=\nu _p$, where
$l_p\colon N\to N$ denotes the left translation by $p$ and $(l_p)_*$ is its differential.
As explained in the second paragraph of  the proof of the assertion,
after rescaling $E$ on the scale of topology around a sequence of points $p_n\in E$ of almost-minimal
injectivity radius,
we find a limit object which is a catenoid, a Riemann minimal example or a minimal parking garage
structure in $\R^3$ with two oppositely handed columns. We will treat each of these three cases
separately.

 {\bf After choosing a subsequence,
suppose that the local picture of $E$ for the sequence $p_n$ is a catenoid.}
Since
in the blow-up process we rescale the ambient metric, then after choosing an
orthonormal basis for the metric Lie algebra of $N$, we can consider the ambient tangent directions
as fixed and so, it makes sense to consider for $n$ large, points $q_n\in E$ arbitrarily close to $p_n$ so that
the unit vectors $\nu _{q_n}\in T_{q_n}N$ converge as $n\to \infty $ to one of the two limiting normal
vectors to the parallel ends of the limit catenoid.
After choosing a subsequence, we can assume that the left translated vectors $G(q_n)=(l_{q_n}^{-1})_*\nu _{q_n}\in
\esf^2 \subset T_e N$ converge as $n\to \infty $ to a vector $w\in \esf^2$.

We next briefly describe the set of two-dimensional subgroups of $\widetilde{\mbox{PSL}}(2,\R )$ as we will use
them in the argument; for details on the following description, see Section~2.7 in~\cite{mpe11}.
Recall that the projective special linear group PSL$(2,\R)$ can be considered to be the group Iso$^+(\HH^2)$
of  orientation-preserving isometries of
the hyperbolic plane $\HH^2$. Using the Poincar\'e disk model for $\HH ^2$, for any point
$\theta \in \esf^1=\partial _{\infty }\HH^2$ one may consider the subset $\HH _{\theta }$ of PSL$(2,\R)$ given by
\[
\HH _{\t }=\{ \phi \in \mbox{Iso}^+(\HH^2) \ | \  \phi (\theta )=\t \} .
\]
$\HH_{\theta }$
is a connected two-dimensional subgroup of PSL$(2,\R)$, 
generated by the hyperbolic translations along geodesics
of $\HH^2$ one of whose ends points is $\t $ and the parabolic
translations along horocycles tangent at $\t $.
Consider the covering map $\Pi\colon N=\widetilde{\mbox{\rm PSL}} (2,\R)\to \mbox{PSL}(2,\R)$, which is a
group homomorphism.

Let $\widetilde{\HH}_{\theta}\subset \Pi^{-1}(\HH_{\theta})$ be the connected, two-dimensional
subgroup of $N$ passing through the identity.
Since by definition, the left invariant Gauss map of a surface
is invariant under left translations, it is clear that the left invariant Gauss map
of a two-dimensional subgroup is constant. Particularizing to the case of the circle family
of subgroups $\{ \widetilde{\HH}_{\theta}\ | \ \t \in \esf^1\} $ of $N$, each $\widetilde{\HH}_{\theta}$
has constant left invariant Gauss map $\G (\t )\in \esf^2\subset T_e\widetilde{\mbox{\rm PSL}}(2,\R )$.
Note that $\t \in \esf ^1\mapsto \G (\t )\in \esf^2$ is injective, because
the Gauss map image of a two-dimensional subgroup determines the subgroup itself.

Choose one of these two-dimensional subgroups $\wt{\HH}=\wt{\HH}_{\theta_0}$ so
that the normal vector $\G(\t _0)$ is different from $\pm w$.
As the ambient metric of $\widetilde{\mbox{\rm PSL}}(2,\R )$ is left invariant,
it follows that the family of left cosets of $\wt{\HH}$ forms a codimension-one 
foliation of $\widetilde{\mbox{\rm PSL}}(2,\R )$, all whose leaves are ambiently isometric to $\wt{\HH}$
and have the same constant value $\G (\t _0)$ for their left invariant Gauss map as $\wt{\HH}$.
After choosing a subsequence of $p_n$, Theorem~\ref{tthm3introd} implies that
we can find a sequence $\ve _n>0$ converging to zero
such that
for each $n\in \N$, the closure $C_n$ of the component of $E\cap B_N(p_n,\ve _n)$ that contains $p_n$
is a compact annulus and if $n$ is large enough, $C_n$ is arbitrarily close to large compact piece $C_{\infty }(n)$
of a complete catenoid $M_{\infty }\subset \R^3$ with $\vec{0}\in M_{\infty }$, $I_{M_{\infty }}\geq 1$ and
$I_{M_{\infty }}(\vec{0})=1$. In particular, $\vec{0}$ lies in the waist circle of $M_{\infty }$.
Without loss of generality, we can assume that $C_{\infty }(n)$ is obtained by intersecting
$M_{\infty }$ with a ball of radius $n=\l _n\ve _n$ in $\R^3$ centered at $\vec{0}$ (here $\l _n=
1/I_E(p_n)$, see items~2 and 3 of Theorem~\ref{tthm3introd}). Take $r>0$ so that
the waist circle $\g _{\infty }$ of $M_{\infty }$ lies in the ball of radius $r/2$ centered at $\vec{0}$.
For $n$ large, there exists a simple closed geodesic $\g_n\subset C_n$
such that $\l _n\g _n$ converges as $n\to \infty $ to $\g _{\infty }$.
In particular for $n$ large, $\g _n$ lies in the ball $B_N(p_n,\de _n)$, where $\de _n=r/\l _n$  (hence $0<\de _n<\ve _n$).
Given $n\in \N$, let
 \[
 U_n=\{ q\in \widetilde{\mbox{\rm PSL}}(2,\R ) \ | \ \mbox{dist}(q,
 p_n\wt{\HH})<\de _n\}
 \]
 be the open tubular neighborhood of the topological plane
 $p_n\wt{\HH}$ of radius $\de _n$, where dist denotes extrinsic
 distance in $\widetilde{\mbox{\rm PSL}}(2,\R )$. Therefore the following property holds for all $n$ large:
\begin{enumerate}[(P1)]
\item $\g _n$ lies inside $B_N(p_,\de _n)\subset U_n$.
\end{enumerate}
By Lemma~3.9 in~\cite{mpe11}, given a two-dimensional subgroup
$H$ of $\widetilde{\mbox{\rm PSL}}(2,\R )$, the family
${\mathcal F}(H)=\{ Hx\ | \ x\in N\} $ of right cosets
of $H$ forms a codimension-one foliation of
$\widetilde{\mbox{\rm PSL}}(2,\R )$ whose leaves have the property
that each one is at a constant distance from each other. If we particularize
to $H=H_n:=p_n\wt{\HH}p_n^{-1}$, it follows that each of
the leaves of ${\mathcal F}(H_n)$ is at a constant
distance from $H_n p_n=p_n\wt{\HH}$. This implies that
 $U_n$ is the union of the right cosets of $H_n$ at
distance less than $\de _n$ from $p_n\wt{\HH}$. Another consequence of
this description is that $N-U_n$ is the set of points in $N$
at distance at least $\de_n$ from $p_n\wt{\HH}$. Therefore,
if we choose $n$ large enough, the following property holds:
\begin{enumerate}[(P2)]
\item Each of the two boundary curves of $C_n$ intersects the two
components of $N-U_n$.
\end{enumerate}

Let $E_n\subset E$ be the subannulus with boundary $\g_1\cup \g_n$.
Choose an integer $k\in \N$ sufficiently large,  so that
$\de_k<\de_1$.
Since the two boundary components of $U_1$ 
 are each right cosets of $H_1$ of  constant
distance $2\de_1$ from each other, then 
the triangle inequality and  property (P1) imply that
the boundary curve $\g_k$ of $E_k$
does not intersect both of the boundary components $\partial_1, \partial_2$ of $U_1$;
as $\g_1$ lies in between $\partial_1, \partial_2$, then
we deduce that $\g_k$ is at the same side of at least one of 
the two components $\partial_1, \partial_2$ as $\g_1$,
say this component is $\partial _1$.
Property (P2) applied to $C_1$ gives that $E_k$ contains points in the 
component $\Delta$  of $N-\partial_1$ that is disjoint from the 
boundary of $E_k$. By the compactness of $E_k$, there exists a point
$q_k\in \Delta \cap E_k$ furthest from $\partial _1$.
By the above description of the leaves of ${\mathcal F}
(H_1)$ as level sets of the distance function to $\partial _1$, we
deduce that
the right coset $H_1 q_k\subset \Delta $
lies on one side of $E_k$ at $q_k$.  Since
$\wt{\mbox{PSL}}(2,\R)$ is a unimodular Lie group, then Corollary~3.17 in~\cite{mpe11}
implies that $H_1q_k$ is a minimal surface, which gives a contradiction
by the maximum principle for minimal surfaces applied at the point $q_k$, see Figure~\ref{fig3}.
\begin{figure}
\begin{center}
\includegraphics[width=9.5cm]{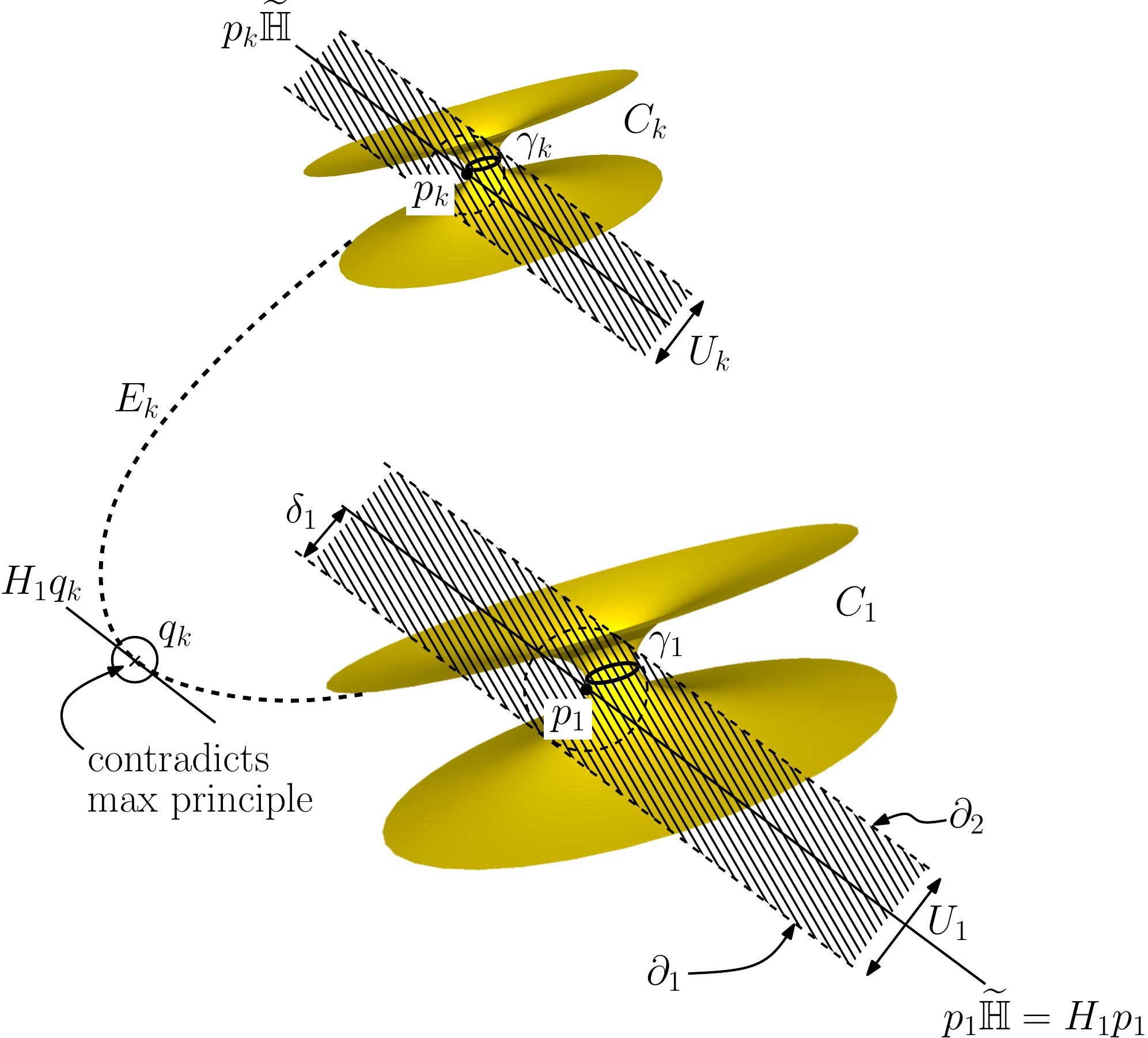}
\caption{The boundary (topological) planes
$\partial _1,\partial _2$ of
$U_1$ are right cosets of $H_1$ at constant
distance $\de _1$ from $H_1p_1$. The contradiction comes from application of the
maximum principle to the minimal surfaces $H_1q_k$ and $E_k$ at an
appropriately chosen interior point $q_k$ of $E_k$.}
\label{fig3}
\end{center}
\end{figure}
This completes the proof of Case~B
when the local picture associated to the sequence $p_n$ is a catenoid.

The proof of
Case~B when the associated local picture is a Riemann minimal
example or a parking garage structure on $\rth$
is essentially identical to the case of the local picture
being a catenoid; in these two subcases, after choosing
a subsequence, there are  associated limit normal vectors $\pm w$
for the related ends of the Riemann minimal example or the planes
of the foliation in the limit
parking garage structure, just as was the case when the local
picture was a catenoid.  Once this choice of subsequence
is made, the proof follows the same reasoning as in the catenoid case
to obtain a contradiction, where in the case of a parking garage structure one
uses the curves $\g_k$ given in item~5.2 of Theorem~\ref{tthm3introd}.  This completes
the proof of the assertion when Case~B holds.
\par
\vspace{.2cm}
\noindent
{\bf Proof of Case C}: Suppose $N$ is a semidirect product $\R^2\rtimes _A\R$
with a left invariant metric, for some real $2\times 2$ matrix $A$.
As in the previous Case~B, we will give details when the local picture
associated to the sequence of points $p_n$ of almost-minimal injectivity radius is a catenoid; the
cases where this local picture is a Riemann minimal
example or a parking garage structure can be argued
similarly and we leave the details to the reader.

Assume that the local picture associated to the blow-up points $p_n$ is a catenoid.
Without loss of generality, we may assume that the
constant mean curvature of the horizontal planes
$x_3^{-1}(t)=\R^2 \rtimes_A\{t\}$ in $N$ is trace$(A)/2\geq 0$ with respect to their
unit normal vector field $E_3=\frac{\partial}{\partial  x_3}$, which is left invariant
(see Section 2.3 of~\cite{mpe11}).
By Remark~2.10 in~\cite{mpe11}, the level sets of the coordinate
functions $x_1, x_2$ are minimal planes in $N$.
It follows from the mean curvature comparison principle
that if the $x_3$-coordinate function of $E$
has a local minimum at a point  $q\in \Int(E)$,
then $E\subset \R^2 \rtimes_A\{x_3(q)\}$.  Similarly,
if the $x_i$-coordinate function, $i=1,2$, of
$E$ has a local minimum or maximum
at a  point  $q\in \Int(E)$,
then $E\subset x_i^{-1}(x_i(q))$.

As in the proof of Case~B, we may assume that after choosing a subsequence,
the left invariant Gauss map of the ends of the limit local
picture catenoid of $E$ is $\pm w\in \esf^2$.  First
consider the case where $w\neq \pm E_3(e)$.
After choosing a subsequence of $p_n$, we can find a sequence $\ve _n>0$ converging to zero
such that the compact catenoidal pieces
$C_n\subset E\cap B_N(p_n,\ve _n)$,
which converge after blowing-up on the scale of
topology to a large compact piece of a catenoid in $\R^3$ containing its waist circle,
satisfy the following properties:
\begin{enumerate}[(P1)$'$]
\item The almost-waist circle (simple closed
geodesic) $\g _n$ of $C_n$ lies inside
the open tubular neighborhood 
$U_n=\R^2 \rtimes _A\{ x_3\in\R \mid \, |x_3-x_3(p_n)|<\de_n\}$ of radius
$\de _n=r/\l _n$
of the horizontal plane $\R^2 \rtimes_A \{x_3(p_n)\}$ and 
inside $B_N(p_n,\de_n)$ (here $\l _n=1/I_E(p_n)$ and $r>0$ are chosen
as in the proof of Case~B).
\item Each of the two boundary curves of $C_n$ intersects the two components
of $N-U_n$.
\end{enumerate}

Given $n\in \N$, let $E_n\subset E$ be the subannulus with boundary $\g_1\cup \g_n$.
Then arguing as in  the proof of Case~B, one finds that for $k$ large,
the $x_3$-coordinate function of $E_k$ has its minimum value at
an interior point $q_k$ of $E_k$,
which forces $E$ to be an end representative of
the plane $\{x_3= x_3(q_k)\}$. Since this plane is isometric to
the flat $\R^2$ but the injectivity radius function of $E$ was assumed to take values
arbitrarily close to zero away from its boundary, we obtain a contradiction.
Henceforth, we will  assume that $w=\pm E_3(e)$.

Since the local picture of $E$ associated to the blow-up points $p_n$ is a catenoid with a vertical axis,
then it is clear that the $x_1$-coordinate of
a similarly defined subannulus $E_k$ cannot have its global maximum at the boundary of $E_k$.
Therefore, $E$ must be contained in a vertical plane of the form $\{ (t_0,x_2,x_3) \mid x_2,x_3\in \R\}$
for some $t_0\in \R$. Exchanging $x_1$ by $x_2$ we find that $E$ is contained in a plane of the form
$\{ (x_1,t_0',x_3) \mid x_1,x_3\in \R\}$ for some $t_0'\in \R $, which is a contradiction.
This completes the proof of Assertion~\ref{ass}. \end{proof}

\begin{assertion} \label{ass2}
Property {\rm (End)} holds.
\end{assertion}
\begin{proof}
Arguing by contradiction, assume that $p_n\in E$ is a divergent sequence in $E$ with
$I_E(p_n)<\frac{1}{n}$. As said in the second paragraph of the proof of Assertion~\ref{ass},
the points $p_n$ can be assumed to be points of almost-minimal injectivity radius.
Let $\l _n=1/I_E(p_n)$. Since the scaled surfaces $\l_n M_n$ are minimal in $\l _nN$ and the sectional curvatures of the
ambient spaces $\l_n N$ are converging uniformly to zero,
then the Gauss equation implies that the exponential map $\exp _{p_n}$ of $ T_{p_n}(\l_n M_n)$
restricted to the closed metric ball of radius $2$ centered at the origin
is a local diffeomorphism for $n$ sufficiently large. As the injectivity radius of $\l _nM_n$ at $p_n$ is 1, then
$\exp _{p_n}$ is a diffeomorphism when restricted to the open disk of radius $1$, and it fails
to be injective on the boundary circle of radius $1$.
Now it is standard to deduce the existence of simple closed geodesic loops
$\G_n$ in $E$ based at $p_n$, each  of length less than $\frac2n$.

Let $\Pi\colon \widetilde{N}\to N$ be the universal cover of $N$.
We discuss two possibilities, depending on whether or not the geodesic loop $\G_n$ is homotopically trivial
in $E$ (note that after extracting a subsequence, we can assume all the $\G_n$ satisfy exactly one of the
possibilities below).
\begin{enumerate}[(Q1)]
\item If all the $\G_n$ are homotopically nontrivial
in $E$, then for $n$ large $f$ lifts through $\Pi $ to a complete embedding
$\widetilde{f}\colon E\to \widetilde{N}$, once we have picked an initial point in
$\Pi ^{-1}(f(\G _n))$ (because as the lengths of the curves $f(\G _n)$ tend to zero as $n\to \infty $, then
for $n$ large the image loops $f(\G_n)$ lie in extrinsic balls in $N$ of arbitrarily small radius
and these extrinsic balls are topological balls for $n$ sufficiently large
because $N$ has positive injectivity radius;
hence, the induced map by $f$ on the fundamental groups is trivial).
In this setting, Assertion~\ref{ass} gives a contradiction.

\item All the $\G_n$ bound disks $D_n$ in $E$. In this case, we consider a
connected component $\widetilde{E}$ of $\Pi^{-1}(E)$ in $\widetilde{N}$. Observe that the disks
$D_n$ lift through $\Pi $ to a sequence of related disks $\widetilde{D}_n\subset \widetilde{E}$.
It follows from the arguments in the proof of
Theorem~\ref{tthm3introd} applied to $\widetilde{E}$ at the sequence of
preimages $\wt{p}_n\in \partial \wt{D}_n$ of the $p_n$ (the $\wt{p}_n$ are points
of almost-minimal injectivity radius for $\widetilde{E}$) that the related local picture
on the scale of topology of
$\widetilde{E}$ around the $\wt{p}_n$ is either a catenoid, a Riemann minimal example
or a minimal parking garage structure with two oppositely handed columns.
As observed at the beginning of the proof of Assertion~\ref{ass},
this implies that the scalar flux Flux($K_n^T,\partial \wt{D}_n)$ with respect to certain
Killing fields $K_n$ in $\wt{N}$, defined as in (\ref{flux}), is nonzero.
As these fluxes are homological
invariants associated to $\partial \wt{D}_n$ but each $\partial \wt{D}_n$ is
homologically trivial on $\wt{E}$, we obtain a contradiction.
\end{enumerate}
\par
\vspace{-.8cm}
\end{proof}

As explained at the beginning of the proof of Theorem~\ref{thm:finitetop},
the validity of Property (End) finishes the proof of the theorem.
{\hfill\penalty10000\raisebox{-.09em}{$\Box$}\par\medskip}

\vspace{.4cm}

\noindent {\em Proof of Corollary~\ref{cor:ft}.} Let $M$ be a complete embedded minimal surface
of finite topology in a complete, locally homogeneous three-manifold $N$. In order to conclude
that $\overline{M}$ has the structure of a minimal lamination, it suffices to check that the
injectivity radius function of $M$ is bounded away from zero in every extrinsic ball of $N$, see
Remark~2 in~\cite{mr13}. This extrinsic local positivity of the injectivity radius function
of $M$ follows directly from modifications of the lifting arguments in the proof of Assertion~\ref{ass2};
the only modification occurs in case (Q1) above, where we must replace
the hypothesis that the injectivity radius of $N$ is positive by the property that
the injectivity radius function $I_N$ in any compact extrinsic ball of $N$ is bounded away from zero.
Therefore, $\overline{M}$ is a minimal lamination of $N$.

By the Stable Limit Leaf Theorem for $H$-laminations in~\cite{mpr19,mpr18}, the limit leaves of $\overline{M}$
have the property that their two-sided covers are stable.  This proves item~{1} of the corollary.

Assume now that $N$ has positive scalar curvature (as $N$ is locally homogeneous, then its scalar curvature is constant).
If $M$ is not proper in $N$, then $\overline{M}$ contains a
limit leaf $L$. Since the two-sided cover of $L$ is stable, it follows from item~1 of Theorem~2.13
in~\cite{mpr19} that $L$ is either an embedded sphere or an embedded projective plane.
After lifting the lamination $\overline{M}$ to a two-sheeted cover of $N$, we may assume that
$L$ is a sphere. But this is impossible since it is a leaf of a lamination and so the nearby leaves must also be
spheres which is not true since $M$ is noncompact. This contradiction proves item~{2} in the corollary.

To prove item 3 of Corollary~\ref{cor:ft}, assume
that $N$ has nonnegative scalar curvature and it is simply connected.  By item~2
of the corollary, we may assume that the scalar curvature of $N$ vanishes.
Since $N$ is homogeneous, then,
metric classification results in Section~4 of Milnor~\cite{mil2} imply that $N$ is either isometric to $\R^3$
 or $N$ is $SU(2)$ with a left invariant metric.
By  the main result of Colding and Minicozzi in~\cite{cm35}, if $N$ is flat, then $M$ is proper
(in fact, $M$ has positive injectivity radius by the results
in~\cite{mr13}).

To finish the proof of item~3 of the corollary, it remains to demonstrate that
if $N$ is $SU(2)$ endowed with a left invariant metric of zero scalar curvature, then
$M$ is proper. We will show that $M$ is indeed compact, even in the more general
case that the constant scalar curvature of the left invariant metric on $SU(2)$
is nonnegative, thereby proving also item~4 of Corollary~\ref{cor:ft} (this will
then finish the proof of the corollary).

Suppose that $M$ is noncompact and we will derive a
contradiction.  As $M$ is noncompact, then the minimal lamination $\overline{M}$
has a  limit leaf $L$ whose two-sided cover $\wt{L}$ must be stable. Since the scalar
curvature of $N$ is assumed to be nonnegative, then item~2(a) of Theorem~2.13
in~\cite{mpr19} implies that  $\wt{L}$ has at most quadratic area growth.
In this situation, it follows from Theorem~1 in~\cite{mper1} that the
space of bounded Jacobi functions on $\wt{L}$ is one-dimensional, and this space coincides
with the space of Jacobi functions with constant sign on $\wt{L}$.
Consider the three-dimensional space of right invariant Killing fields on $N$.
As $N$ is compact, then every such vector field $F$ is bounded on $N$, and thus, the
inner product $\langle F,\nu \rangle $ of $F$ with the unit normal vector field $\nu $ to $\wt{L}$
defines a bounded Jacobi function on $\wt{L}$. Now pick a point
$p\in \wt{L}$ and let $F_1,F_2$ be linearly independent right invariant vector fields
on $SU(2)$ so that $F_1(p),F_2(p)$ are tangent to $\wt{L}$ at $p$. Since for $i=1,2$
the function $u_i=\langle F_i,\nu \rangle $ is a bounded Jacobi function on
$\wt{L}$, then $u_i$ has constant sign on $\wt{L}$. As $u_i$ vanishes at $p$, then
$F_i$ is everywhere tangent to $\wt{L}$, for $i=1,2$. In particular, the Lie bracket
$[F_1,F_2]$ is also everywhere tangent to $\wt{L}$. This is impossible, as $[F_1,F_2]$ is
a right invariant vector field on $SU(2)$ which is everywhere linearly independent with
$F_1,F_2$ (this last property follows from the structure constants of the unimodular
group $SU(2)$).
This contradiction implies that $M$ must be compact, which completes the proof of Corollary~\ref{cor:ft}.
{\hfill\penalty10000\raisebox{-.09em}{$\Box$}\par\medskip}

\begin{remark} \label{remarkAbs}{\em  We now prove the result stated 
in the last sentence of the abstract.
To see this property holds first observe that,
by the arguments at the end of the proof of Corollary~\ref{cor:ft},
 $N=\esf^3$ equipped with a homogeneous metric with
nonnegative scalar curvature satisfies the hypothesis of $N$ described in the
next corollary, which means that any complete embedded minimal surface in $N$
cannot have an annular end. Next observe that if $\S$ is a surface of finite genus with
countably many ends, then $\S$ is compact or it must have at least one annular end,
and so, the stated property in the abstract now follows.}
\end{remark}

\begin{corollary}
Let $N$ be a complete, locally homogeneous three-manifold which does not admit
any complete embedded stable minimal surfaces. If $M$ is a complete embedded minimal
surface in $N$, then every annular end of $M$ is proper.

\end{corollary}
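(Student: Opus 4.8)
The plan is to argue by contradiction: suppose $M$ has an annular end $E$ whose inclusion into $N$ fails to be proper, and manufacture from it a complete, embedded, stable minimal surface in $N$, contrary to the hypothesis. The guiding idea is that the closure of $E$ in $N$ must carry the structure of a minimal lamination — which is exactly what the results above yield when applied to $E$ — and that the failure of properness forces this lamination to have a limit leaf, which by the Stable Limit Leaf Theorem is the surface we are after.

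First I would establish the lamination structure. Writing $E\cong \esf^1\times[0,\infty)$ with compact boundary $\partial E$ and inclusion $f\colon E\to N$, observe that $E$ is itself a complete, embedded minimal surface of finite (in fact genus zero) topology, so the arguments in the proofs of Assertions~\ref{ass} and~\ref{ass2} and of Corollary~\ref{cor:ft} apply to $E$ directly: one passes to the universal cover $\widetilde N$ of $N$ (which is homogeneous, hence homogeneously regular, because $N$ is complete and locally homogeneous), invokes Theorem~\ref{tthm3introd} on $\widetilde N$ together with the homological invariance of the flux of the relevant Killing fields, and — exactly as in the proof of Corollary~\ref{cor:ft} — replaces the positivity of the global injectivity radius of $N$ by the local fact that $I_N$ is bounded away from zero on compact subsets of $N$. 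This gives that $I_E$ is bounded away from zero on $f^{-1}(K)$ for every compact $K\subset N$ disjoint from $f(\partial E)$. By the Minimal Lamination Closure Theorem in~\cite{mr13}, the closure of $f(E)$ in $N$ then has the structure of a minimal lamination $\cL$ in a neighborhood of each of its limit points; and since $E$ is not proper, after replacing $E$ by a sub-end we may assume there is a sequence $p_n\in E$ diverging intrinsically in $E$ with $f(p_n)$ converging to such a limit point $q$ away from $f(\partial E)$. The leaf $L$ of $\cL$ through $q$ is then approached by sheets of $\cL$ coming from intrinsically divergent parts of $E$, so by standard minimal lamination theory $L$ is a limit leaf of $\cL$.

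It remains to apply the Stable Limit Leaf Theorem for $H$-laminations in~\cite{mpr19,mpr18} — this is item~1 of Corollary~\ref{cor:ft}, whose proof does not use local homogeneity, as noted in the remark following that corollary — to conclude that the two-sided cover of $L$ is stable; equivalently, $L$ is a complete, embedded, stable minimal surface of $N$, which is the desired contradiction. The main obstacle in this scheme is the first step, the uniform local lower bound on $I_E$: this is where the full strength of the Local Picture Theorem on the Scale of Topology and of the flux arguments in the proof of Theorem~\ref{thm:finitetop} is needed, and where the hypothesis that $N$ be locally homogeneous genuinely enters. Everything afterward is a formal consequence of minimal lamination theory, the only minor subtlety being the case of a one-sided limit leaf $L$, which is absorbed into the convention that a one-sided minimal surface is stable precisely when its two-sided cover is.
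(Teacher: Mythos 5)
Your proposal is correct and follows essentially the same route as the paper: establish that $I_E$ is bounded away from zero on compact subdomains of $N$ by the (localized) arguments from the proofs of Assertions~\ref{ass}, \ref{ass2} and Corollary~\ref{cor:ft}, apply the Minimal Lamination Closure Theorem of~\cite{mr13} to get a lamination structure on the limit set of $E$, and then invoke the Stable Limit Leaf Theorem to produce a stable leaf contradicting the hypothesis on $N$. The additional detail you supply on how non-properness yields a limit leaf is consistent with the paper's (terser) argument.
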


\begin{proof}
Suppose $E\subset M$ is an annular end.  With minor modifications,
the proof of Corollary~\ref{cor:ft} shows that the
injectivity radius function of $M$ restricted to $E$
is bounded away from zero on compact subdomains of $N$.
Suppose that $E$ is not proper. Then, the limit set Lim$(E)$ of $E$ is nonempty.
As the injectivity radius function of $M$ restricted to $E$ is
bounded away from zero on compact subdomains of $N$, the Lamination 
Closure Theorem in~\cite{mr13} implies that Lim$(E)$ has the structure 
of a minimal lamination of $N$. By the Stable Limit Leaf Theorem 
for $H$-laminations~\cite{mpr19,mpr18}, the two-sided cover
of any leaf of Lim$(E)$ is stable, thereby contradicting our hypotheses on $N$.
\end{proof}

\vspace{.2cm}

\center{William H. Meeks, III at profmeeks@gmail.com\\
Mathematics Department, University of Massachusetts, Amherst, MA 01003}
\center{Joaqu\'\i n P\'{e}rez at jperez@ugr.es\\
Department of Geometry and Topology and Institute of Mathematics
(IEMath-GR), University of Granada, 18071, Granada, Spain}

\bibliographystyle{plain}
\bibliography{bill}
\end{document}